\theoremstyle{definition}
\newtheorem{thm}{Theorem}[section]
\newtheorem{cor}[thm]{Corollary}
\newtheorem{conj}[thm]{Conjecture}
\newtheorem{lemma}[thm]{Lemma}
\newtheorem{defn}[thm]{Definition}
\newcommand\nc{\newcommand}
\nc\Span{\text{\rm Span}}
\nc\Id{\text{Id}}
\nc \cc {\mathbb{C}}
\nc \ff {\mathbb{F}}
\nc \nn {\mathbb{N}}
\nc \pp {\mathbb{P}}
\nc \qq {\mathbb{Q}}
\nc \rr {\mathbb{R}}
\nc \zz {\mathbb{Z}}
\nc\cA{\mathcal{A}}
\nc\cC{\mathcal{C}}
\nc\cD{\mathcal{D}}
\nc\cE{\mathcal{E}}
\nc\cF{\mathcal{F}}
\nc\cG{\mathcal{G}}
\nc\cH{\mathcal{H}}
\nc\cK{\mathcal{K}}
\nc\cM{\mathcal{M}}
\nc\cO{\mathcal{O}}
\nc\cP{\mathcal{P}}
\nc\cR{\mathcal{R}}
\nc\cS{\mathcal{S}}
\nc\fm{\mathfrak{m}}
\nc\fp{\mathfrak{p}}
\nc\Tr{\text{Tr}}
\nc\into{\hookrightarrow}
\nc\st{\text{ s.t. }}
\nc\intense[1]{\textcolor[rgb]{1.00,0.00,0.00}{\textbf{#1}}}
\renewcommand{\(}{\left(}
\renewcommand{\)}{\right)}
\nc\Mat{\text{\rm Mat}}
\nc\GL{\text{\rm GL}}
\nc\SU{\text{\rm SU}}
\nc\SO{\text{\rm SO}}
\nc\SL{\text{\rm SL}}
\nc\Sp{\text{\rm Sp}}
\nc\EL{\text{\rm EL}}
\nc\GEM{\text{\rm GEM}}
\nc\Alt{\text{\rm Alt}}
\nc\Sym{\text{\rm Sym}}
\renewcommand{\(}{\left(}
\renewcommand{\)}{\right)}
\nc\inject{\hookrightarrow}
\nc{\mattwo}[4]{\left[\begin{array}{cc} #1  & #2\\  #3 & #4 \\ \end{array} \right]}
\nc{\matthree}[9]{\left[\begin{array}{ccc} #1  & #2 & #3\\  #4 & #5 & #6 \\ #7 & #8 & #9 \\ \end{array} \right]}
\nc{\vecttwo}[2]{\left[\begin{array}{c} #1 \\ #2 \\ \end{array} \right]}
\nc{\vectthree}[3]{\left[\begin{array}{c} #1 \\ #2\\  #3 \\ \end{array} \right]}
\nc{\del}{\partial}
\nc\onto{\twoheadrightarrow}
\nc\const{\text{const}}
\nc\rrp{\rr P}
\nc\ul{\underline}
\nc\ol{\overline}
\nc\uline{\underline}
\nc\oline{\overline}
\nc\oset{\overset}
\nc\uset{\underset}
\nc\heart{\heartsuit}
\nc\spade{\spadesuit}
\nc\club{\clubsuite}
\nc\marg[1]{\marginnote{\boxed{\text{#1}}}}
\nc\margq[1]{\marginnote{\textcolor[rgb]{1.00,0.00,0.00}{#1}}}
\nc\Char{\text{Char}}
\nc\Frac{\text{Frac}}
\nc\wo{\backslash}
\nc\diag{\text{diag}}
\nc\wtl{\widetilde}
\nc\nsubgp{\triangleleft}
\nc\Cay{\text{Cay}}
\nc\Hom{\text{Hom}}
\nc\Gp{\text{Gp}}
\nc\Set{\text{Set}}
\nc\la{\langle}
\nc\ra{\rangle}
\nc\wht{\widehat}
\nc\ddx[2]{\frac{\partial {#1}}{\partial {#2}}}
\nc\dddx[3]{\frac{\partial^2 {#1}}{\partial {#2}\partial{#3}}}
\nc\mult{\text{mult}}
\nc\supp{\text{supp}}
\nc\sign{\text{sign}}
\nc\meas{\text{meas}}
\nc\tmax{\text{max}}
\nc\red{\text{red}}
\nc\dhamm{d_{\text{Hamm}}}
\nc\tr{\text{tr}}
\title{Finite Part of Operator $K$-Theory for Groups with Rapid Decay}
\author{}
\begin{document}
\maketitle

\begin{abstract} In this paper we study the part of the $K$-theory of the reduced $C^*$-algebra arising from torsion elements of the group, and in particular we study the pairing of $K$-theory with traces and when traces can detect certain $K$-theory elements. In the case of groups with Property RD, we give a condition on the growth of conjugacy classes that determines whether they can be detected. Moreover, in the case that they can be detected, we show that nonzero elements in the part of the $K$-theory generated by torsion elements are not in the image of the assembly map $K^G_0(EG) \to K_0(C^*G)$. One application of this is a lower bound for the structure groups of certain manifolds.
\end{abstract}

\section{Introduction}

Let $G$ be a group. An element $g$ is said to have order $d$ for positive integer $d$ if $g^d=1$ and $g^k \neq 1$ for any $0 < k <d$. If no such $d$ exists, we say that $g$ has order $\infty$. We say an element is torsion if it has finite order.

For a torsion element $g$ of order $d$, we define a corresponding idempotent $p$ as
\[p = \frac{1+g+g^2+\cdots+g^{d-1}}{d} \in \cc G.\]
It is easy to see that this is an idempotent, so we can view it in $K_0(\cc G)$. Let $C^*_{\max}G$ denote the maximal group $C^*$-algebra and $C^*_{\red}G$ denote the reduced group $C^*$-algebra. Then one can also view the above idempotent in $K_0(C^*_{\red}G)$ or $K_0(C^*_{\max}G)$. The question we would like to study is whether for idempotents $p_1,\ldots p_k$ corresponding to torsion elements $g_1,\ldots, g_k$ of distinct orders $d_1,\ldots d_k>1$, we have that $1,p_1,\ldots p_k$ are linearly independent in $C^*_{\red}G$ and $C^*_{\max}G$.

The study of idempotents arising from torsion elements was introduced in \cite{WY}. In that paper, Weinberger and Yu showed that if the group is finitely embeddable in Hilbert spaces, then the $p_i$ are linearly independent in $K_0(C^*_{\tmax}(G))$. They conjectured that the same holds for all groups. They also showed that the rank of the part of $K_0(C^*_{\tmax}(G))$ generated by these torsion elements bounds the non-rigidity for $M$. More precisely, it gives a lower bound for the rank of the structure group $S(M)$, that is, the abelian group of equivalence classes of pairs $(f,M')$, where $M'$ is a compact, oriented manifold, and $f:M' \to M$ is an orientation preserving homotopy equivalence.

Our approach to proving the linear independence of $p_i$ is to find $k+1$ traces on the algebra such that the $(k+1) \times (k+1)$ matrix that arises from evaluating the traces on $1,p_1,\ldots p_k$ has non-zero determinant. If there are such traces, then we say that $1,p_1,\ldots p_k$ can be distinguished by traces. It is easy to see that being distinguished by traces implies linear independence in $K_0$. For groups with rapid decay property (property RD), we prove a necessary and sufficient condition for the $1,p_1,\ldots p_k \in K_0(C^*_{\red}G)$ to be distinguished by traces, namely that they can be distinguished by traces if and only if the relevant conjugacy classes grow polyonimally. We discuss some examples of both our positive and our negative results. The negative results are particularly striking because they demonstrate a phenomenon in reduced group $C^*$-algebras that is very different from the maximal $C^*$-algebra case.

The author would like to thank Prof. Guoliang Yu for his helpful advice and guidance.




\section{Results}	

Consider a group $G$ with torsion elements $g_1,g_2,\ldots, g_k$ of orders $d_1,\ldots d_k$ respectively, where the $d_i$ are distinct postive integers. For each $g_i$, we can consider the element
\[p_i= \frac{1+g_i+g_i^2+\cdots+g_i^{d_i-1}}{d_i} \in \cc G.\]
Then $p_i$ is an idempotent, so we may consider it in the $K_0$ groups, $K_0(C^*_{\tmax}(G))$ and $K_0(C^*_{\red}(G))$. We would like to explore whether
\[p_0=1,p_1,\ldots p_k\]
are linearly independent in these $K_0$ theories.

We call the subgroup of $K_0(C^*_{\max}G)$ (resp. $K_0(C^*_{\red}G)$) generated by the $p_i$ the finite part of $K_0(C^*_{\max}G)$ (resp. $K_0(C^*_{\max}G)$).

It was conjectured by Weinberger and Yu in \cite{WY} that the $p_i$ are linearly independent in $K_0(C^*_{\max}G)$. More precisely:
\begin{conj} For a group $G$ with torsion elements $g_1,g_2,\ldots, g_k$ of orders $d_1,\ldots d_k$ respectively, where the $d_i$ are distinct positive integers. Let $p_i$ denote the idempotent corresponding to $g_i$ as above. Then the $1,p_1,\ldots p_k$ are linearly independent in $K_0(C^*_{\max}G)$. Moreover, any nonzero element in the subgroup generated by $p_1,\ldots p_k$ is not in the image of the assembly map $K^G_0(EG) \to K_0(C^*_{\max}(G))$, where $EG$ is the universal space for proper and free $G$-action.
\label{WY conj}
\end{conj}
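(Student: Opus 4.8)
The plan is to prove both halves of the conjecture with a single family of \emph{delocalized traces} on $\cc G$, and then to port these functionals to $K_0(C^*_{\max}G)$ and study their behaviour on the image of the assembly map. For a conjugacy class $\langle h\rangle$ define the linear functional $\tau_h\big(\sum_{g} a_g\, g\big) = \sum_{g\sim h} a_g$, picking out the total coefficient mass carried by $\langle h\rangle$; each $\tau_h$ is a trace on $\cc G$, and $\tau_e$ is the canonical trace. I would work throughout with the $k+1$ traces $\tau_e,\tau_{g_1},\dots,\tau_{g_k}$, and the whole argument rests on a single determinant computation together with a vanishing statement along $EG$.

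First I would establish linear independence of $1,p_1,\dots,p_k$. Evaluating the traces on $p_0=1,p_1,\dots,p_k$ produces the $(k+1)\times(k+1)$ matrix $A$ with $A_{0,0}=\tau_e(1)=1$, $A_{0,i}=\tau_e(p_i)=1/d_i$, $A_{j,0}=\tau_{g_j}(1)=0$ for $j\ge 1$, and $A_{j,i}=\tau_{g_j}(p_i)=\frac1{d_i}\,\#\{0\le l<d_i : g_i^{\,l}\sim g_j\}$ for $i,j\ge 1$. The first column is $(1,0,\dots,0)^{\mathsf{T}}$, so $\det A=\det M$ with $M=(A_{j,i})_{i,j=1}^{k}$. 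Reordering so that $d_1<\dots<d_k$, I claim $M$ is upper triangular: if $j>i$ then $d_j>d_i$, and $g_i^{\,l}\sim g_j$ would force the order of $g_i^{\,l}$, namely $d_i/\gcd(l,d_i)\le d_i<d_j$, to equal $d_j$, which is impossible; hence $M_{j,i}=0$. The diagonal entries satisfy $M_{i,i}\ge \frac1{d_i}>0$ because $l=1$ already contributes $g_i\sim g_i$. Thus $\det A>0$, and since a relation $\sum_{i=0}^{k}c_i p_i=0$ in $K_0$ would give $A\,(c_0,\dots,c_k)^{\mathsf{T}}=0$, all $c_i$ vanish.

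Next I would treat the assembly-map statement. Since $EG$ carries a free proper action, $K^G_0(EG)\cong K_0(BG)$, and every class in the image of $\mu\colon K^G_0(EG)\to K_0(C^*_{\max}G)$ is the higher index of an operator with \emph{free} isotropy. The key input is that each delocalized trace $\tau_{g_j}$ with $g_j\neq e$ vanishes on $\mu\big(K^G_0(EG)\big)$: the local density computing a delocalized index at a nontrivial group element is supported on the fixed-point set of that element, which is empty for a free action (this is the delocalized analogue of the fact that the $L^2$-index sees only the identity element, cf. Lott's delocalized invariants). Granting this, suppose $x=\sum_{i=1}^{k}n_i p_i$ is nonzero in $K_0$ yet lies in the image of $\mu$. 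Then $\tau_{g_j}(x)=0$ for every $j\ge 1$, i.e. $M\,(n_1,\dots,n_k)^{\mathsf{T}}=0$; nonsingularity of $M$ forces all $n_i=0$, so $x=0$ in $\cc G$ and hence in $K_0$, a contradiction.

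The main obstacle is analytic rather than algebraic. The functionals $\tau_h$ are transparently defined and tracial on the group ring, but both arguments above secretly require that each $\tau_h$ descend to a homomorphism on $K_0(C^*_{\max}G)$, which means extending it to a bounded trace on the completion (or on a dense, holomorphically closed smooth subalgebra), and likewise that the delocalized vanishing along $EG$ be formulated at the $C^*$-level. This extension step is exactly what fails for a general group: it is where Weinberger and Yu invoke finite embeddability into Hilbert space for the maximal algebra, and where the present paper uses Property RD together with polynomial growth of the conjugacy classes $\langle g_i\rangle$ to control $\tau_h$ on $C^*_{\red}G$. I therefore expect the genuine difficulty in any attack on the full conjecture to be the construction of such bounded extensions in the required generality; once boundedness is in hand, the triangular determinant and the free-isotropy vanishing are robust.
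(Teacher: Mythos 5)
You have not proved the statement, and you could not have: it is Conjecture \ref{WY conj}, which is open, and which the paper itself does not prove --- it only establishes it for groups with Property RD whose torsion conjugacy classes all grow polynomially (Theorem \ref{thm lin indep}), and for groups residually in that class. Your algebraic skeleton is, in fact, precisely the skeleton of the paper's proof of that special case: the delocalized traces $\tau_{g}$, the observation that conjugate elements have equal order so the matrix $\(\tau_{g_j}(p_i)\)$ is triangular after sorting $d_1<\cdots<d_k$, the diagonal bound $\tau_{g_i}(p_i)\geq \frac{1}{d_i}$, and the vanishing of delocalized pairings on the image of assembly. That part of your write-up is correct and matches the paper. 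But, as you yourself concede in your closing paragraph, the two load-bearing steps are absent: (i) an extension of each $\tau_{g_j}$ to a bounded trace on $C^*_{\max}G$ (or on a dense subalgebra with the same $K_0$), without which the implication ``$\sum c_ip_i=0$ in $K_0$ implies $A(c_0,\ldots,c_k)^{\mathsf{T}}=0$'' does not even parse; and (ii) a proof at the completed level that these extensions kill the image of $K_0^G(EG)$. A conditional argument whose hypotheses are exactly the open part of the problem is not a proof of the conjecture.

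Two further points on why the gap is essential rather than technical. First, the paper's negative results show the trace strategy \emph{provably fails} in general at the reduced level: for a Property RD group with a superpolynomially growing conjugacy class (e.g.\ $\la x,y\ra/x^3$ with $g_1=x$), every bounded trace on $C^*_{\red}G$ vanishes on that class, so no family of traces can separate $1$ and $p_1$ there; and at the maximal level no general construction of the needed extensions is known --- this is precisely why Weinberger and Yu prove their results via finite embeddability into Hilbert space rather than via traces. Second, your justification of the assembly-map vanishing (``free isotropy, so the delocalized index density has empty support'') is a heuristic, not an argument valid on $K_0(C^*_{\max}G)$. The paper's version runs through the Schatten-class group algebra $\cS G$, the multitraces $\tau_{n,g}$ satisfying $j^*\tau_{n,g}=S^k\tau_g$ in Connes' cyclic cohomology, the vanishing theorem of \cite{WY} for the algebraic assembly map $H^{orG}_0(EG,\mathbb{K}(\cS)^{-\infty})\to K_0(\cS G)$, and then Jolissaint's isomorphism $K_0(H^sG)\simeq K_0(C^*_{\red}G)$ to transport the conclusion --- machinery that exists only under Property RD and has no known analogue for $C^*_{\max}G$. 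So the fair assessment is: you have faithfully reconstructed the paper's conditional strategy and honestly located the obstruction, but the statement as given remains unproved by your proposal.
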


The assembly map is the map $K^G_0(EG) \to K_0(C^*_{\max}(G))$ studied in \cite{BFJR} and \cite{BCH}.

To get at this question, we consider whether there are traces $\tau_0,\tau_1,\ldots,\tau_k:C^*G: \to \cc$ such that the matrix $A_{ij}=\tau_i(p_j)$ has non-zero determinant. If this were true, then we know that the $p_i$ are linearly independent, and we say that they are distinguishable by traces. If there are no such $\tau_i$, then the $p_i$ may still be linearly independent, but there is no way to prove this by traces, so we say that they are not distinguishable by traces.

Note that if idempotents can be separated by traces on $C^*_\red G$, then they can also be separated by traces on $C^*_\tmax G$, because there is a natural map $C^*_\tmax G \to C^*_{\red}G$, so the traces on $C^*_{\red}G$ induce traces on $C^*_\tmax G$ in a way that does not affect the values on the idempotents. However, as we shall see, the converse is not true--indeed we shall give an example in which the idempotents can be separated by traces in $C^*_\tmax$, but not in $C^*_\red$.

One other notion we will need is that of property RD. Recall that a length function on a group $G$ is a function $l:G \to \zz^{ \geq 0}$, such that $l(fg) \leq l(f)+l(g)$, $l(g^{-1})=l(g)$, and $l(e)=0$, such that for $S \subset \zz^{ \geq 0}$ finite $l^{-1}(S)$ is also finite. Given such a length function on a group, there is a norm $\|\cdot\|_{H^s}$ on $\cc G$ given by
\[\|\sum c_g g\|_{H^s}^2 = \sum |c_g|^2(1+l(g))^{2s}.\]
Let $H^s(G)$ denote the completion of $\cc G$ with respect to this norm. Then we say $G$ has rapid decay property, or property RD if there is a length function on it and a constant $C$ such that
\[\|\sum c_g g\|_{\red} \leq C\|\sum c_g g\|_{H^s}.\]

In this case, there is a natural map $H^sG \to C^*_{\red}G$ obtained by taking the identity $\cc G \to \cc G$.

Note that for $t>s$, $\|x\|_{H^t}>\|x\|_{H^s}$ and we have natural map $H^t(G) \to H^s(G)$. So if we have property RD for $s$, then we also have the inequality for any $t>s$.

Jolissaint introduced this property in \cite{Joli1}. He also showed in \cite{Joli-K} that for such groups the map $H^sG \to C^*_{\red}G$ induces and isomorphism of $K$-theory $K^0(H^sG) \to K^0(C^*_{\red}G)$.

Let us define some notation. For $g_i$, let $C(g_i) \subset G$ denote the conjugacy class of $g_i$. 

We have the following results:

\subsection{Positive Results}

Let $G$ be a group with Property RD. Suppose $C(g_i)$ has polynomial growth, that is, for $C(g_i)_l= \{g| g \in C(g_i), l(g)=l\}$ and $n_{i,l}=|C(g_i)_l|$, we have $n_{i,l}<P(l)$ for some polynomial $P$, then the $p_i$ can be separated by traces on $H^s(G)$ for some $s$. More precisely, we have:

\begin{lemma} Let $G$ be a group with Property RD and suppose $C(g_1)$ has polynomial growth. Then the trace $\tau:\cc G \to G$ given by $\tau\(\sum c_g g\) = \sum_{g \in C(g_1)}c_g$ extends to $H^s(G)$ for sufficiently large $s$.
\end{lemma}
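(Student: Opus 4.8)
The content of the statement is really a continuity assertion: once we know $\tau$ is bounded for the $\|\cdot\|_{H^s}$ norm, it extends uniquely to the completion $H^s(G)$ by density of $\cc G$. So the plan is to produce, for $s$ large enough, a finite constant $C=C(s)$ with $|\tau(x)|\le C\|x\|_{H^s}$ for every $x=\sum c_g g\in\cc G$.

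The estimate is a single application of Cauchy--Schwarz adapted to the weighted $\ell^2$ structure of the $H^s$ norm. First bound crudely $|\tau(x)|=\big|\sum_{g\in C(g_1)}c_g\big|\le\sum_{g\in C(g_1)}|c_g|$. Then insert the weight by writing $|c_g|=\big(|c_g|(1+l(g))^s\big)(1+l(g))^{-s}$ and applying Cauchy--Schwarz over $g\in C(g_1)$:
\[|\tau(x)|\le\Big(\sum_{g\in C(g_1)}|c_g|^2(1+l(g))^{2s}\Big)^{1/2}\Big(\sum_{g\in C(g_1)}(1+l(g))^{-2s}\Big)^{1/2}.\]
The first factor is at most $\|x\|_{H^s}$, so everything reduces to finiteness of the second factor.

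This is where polynomial growth enters, and where I expect the only real care to be needed, although the argument is short. Grouping the sum by length and using $n_{1,l}=|C(g_1)_l|\le P(l)$ gives
\[\sum_{g\in C(g_1)}(1+l(g))^{-2s}=\sum_{l\ge0}n_{1,l}(1+l)^{-2s}\le\sum_{l\ge0}P(l)(1+l)^{-2s}.\]
If $\deg P=m$, the general term is $O(l^{m-2s})$, so the series converges as soon as $2s-m>1$, i.e. for any $s>(m+1)/2$. For such $s$ the second factor equals a finite constant $C(s)$, yielding $|\tau(x)|\le C(s)\|x\|_{H^s}$ and hence the desired bounded extension.

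Two structural remarks. First, the trace identity $\tau(ab)=\tau(ba)$ holds on $\cc G$ because conjugation permutes $C(g_1)$ and preserves coefficient sums; this identity then passes to $H^s(G)$ by continuity, so the extended functional is still tracial. Second, it is worth emphasizing that property RD is not actually used in this extension: the bound depends only on the definition of $\|\cdot\|_{H^s}$ and on polynomial growth of $C(g_1)$. Property RD will be invoked only afterwards, to transport this trace --- and its pairing with the idempotents $p_i$ --- from $H^s(G)$ to $K_0(C^*_{\red}G)$ via Jolissaint's isomorphism.
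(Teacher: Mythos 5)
Your proof is correct, and it is essentially the paper's estimate reorganized into a single, cleaner step. The paper proceeds in two stages: it first groups the sum over $C(g_1)$ into length shells $C(g_1)_l$, applies Cauchy--Schwarz within each shell (quoted there as the power mean inequality) to produce a factor $\sqrt{n_{1,l}}$, and then applies a second, weighted Cauchy--Schwarz across the shell index $l$, using the convergence of $\sum_l 1/l^2$ to insert a weight $l^2$; only at the end does it compare the resulting weighted $\ell^2$ expression $\left(\sum_l n_{1,l}\left(\sum_{g \in C(g_1)_l}|c_g|^2\right)l^2\right)^{1/2}$ with the $H^s$ norm, choosing $s$ so that $n_{1,l}\,l^2 \leq (1+l)^{2s}$. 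You collapse both stages into one global Cauchy--Schwarz directly against the $H^s$ weight $(1+l(g))^s$, pushing all the work into the finiteness of $\sum_l n_{1,l}(1+l)^{-2s}$, which polynomial growth gives at once. What your version buys is an explicit threshold, $s > (m+1)/2$ for $m = \deg P$, where the paper only asserts ``some $s$''; it also sidesteps the paper's slightly delicate handling of the $\sum_l 1/l^2$ factor. The two arguments are mathematically equivalent, both being instances of the duality bound $|\tau(x)| \leq \|x\|_{H^s}\,\|\chi_{C(g_1)}\|_{H^{-s}}$.

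Your closing remarks are also accurate and worth keeping: property RD is indeed not used in the extension itself --- the hypothesis enters only downstream, when Jolissaint's isomorphism $K_0(H^sG) \simeq K_0(C^*_{\red}G)$ transports the pairing of $\tau$ with the idempotents, exactly as the paper does in Theorem \ref{thm lin indep}. One small caveat on traciality: the identity $\tau(ab)=\tau(ba)$ on $\cc G$ follows as you say (since $gh \in C(g_1)$ if and only if $hg \in C(g_1)$), but to extend it to all of $H^s(G)$ by continuity you need $H^s(G)$ to be an algebra with continuous multiplication, and that is where RD (for $s$ large) would re-enter via Jolissaint. Since the lemma only asserts that $\tau$ extends as a bounded linear functional, this does not affect the correctness of your proof.
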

\begin{proof} Notice that
\begin{equation}\left|\sum_{g \in C(g_1)} c_g\right|  \leq \sum_l \left| \sum_{g \in C(g_1)_l} c_g\right| \leq \sum_l \sqrt{n_{1,l}}\sqrt{\sum_{g \in C(g_1),l}c_g^2}
\leq C\sqrt{\sum_l n_{1,l} \(\sum_{g \in C(g_1)_l} c_g^2\)l^2}\label{rd poly 1}\end{equation}
for some consant $C$. Here, the second inequality follows from the Power Mean Inequality, which says $\frac{\sum_{i=1}^n a_i}{n} \leq \sqrt{\frac{\sum a_i^2}{n}}$, and the third inequality follows from Cauchy Schwarz inequality, which implies that for sequence $a_l$, $\sum a_l \leq \sqrt{\(\sum a_l^2l^2\)\(\sum \frac{1}{l^2}\)} \leq \frac{\pi}{\sqrt{6}}\sqrt{\(\sum a_l^2l^2\)}$.

But because $n_{1,l}$ is polynomial growth, the right hand side of \eqref{rd poly 1} is bounded by the $H^s$ norm for some $s$, so the trace can be extended, as desired.
\end{proof}

\begin{thm} Let $G$ be a group with Property RD and suppose that the $C(g_i)$ all have polynomial growth. Then the $p_i$ can be separated by traces on $H^s$. Thus, they are linearly independent in $K_0(H^sG)$. Moreover, they are linearly independent in $K_0(C^*_{\red}G)$, and in $K_0(C^*_{\red}G)$, nonzero elements in the subgroup generated by $p_i$ cannot lie in the image of the assembly map. Thus, groups with property RD where all the conjugacy classes of torsion elements grow polynomially satisfy Conjecture \ref{WY conj}.
\label{thm lin indep}
\end{thm}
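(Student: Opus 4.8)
The plan is to produce the required $k+1$ traces explicitly from the Lemma and then reduce every assertion to a single linear-algebra computation. For each $i=1,\dots,k$ the conjugacy class $C(g_i)$ has polynomial growth, so by the Lemma the delocalized trace
\[
\tau_i\Bigl(\sum_g c_g g\Bigr)=\sum_{g\in C(g_i)}c_g
\]
extends continuously to $H^s(G)$ once $s$ exceeds the (finite) maximum of the exponents supplied by the Lemma over $i=1,\dots,k$; together with the canonical trace $\tau_0(\sum c_g g)=c_e$ (manifestly $H^s$-continuous) these are genuine traces, since each is given by pairing with the class function $\mathbf 1_{C(g_i)}$. I would then show the matrix $A_{ij}=\tau_i(p_j)$, $0\le i,j\le k$, is invertible.

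The key structural observation is an ordering/divisibility constraint. Reorder so that $d_1<\dots<d_k$. For $i,j\ge 1$,
\[
\tau_i(p_j)=\frac{1}{d_j}\,\bigl|\{\,0\le m<d_j : g_j^{\,m}\in C(g_i)\,\}\bigr|,
\]
and any $g_j^{\,m}\in C(g_i)$ is conjugate to $g_i$, hence has order $d_i$; since $\mathrm{ord}(g_j^{\,m})=d_j/\gcd(m,d_j)$ divides $d_j$, a nonzero entry forces $d_i\mid d_j$. Thus $A_{ij}=0$ whenever $i>j$, so the block $(A_{ij})_{1\le i,j\le k}$ is upper triangular, with diagonal entries $\tau_i(p_i)\ge 1/d_i>0$ (the power $m=1$ already contributes $g_i\in C(g_i)$). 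Because $\tau_0(p_0)=1$, $\tau_0(p_j)=1/d_j$ and $\tau_i(p_0)=\tau_i(e)=0$ for $i\ge1$ (as $g_i\neq e$), the full $(k+1)\times(k+1)$ matrix is upper triangular with nonzero diagonal, so $\det A=\prod_{i=1}^k\tau_i(p_i)\neq 0$. Hence $1,p_1,\dots,p_k$ are separated by traces on $H^s(G)$, which immediately gives their linear independence in $K_0(H^sG)$; applying Jolissaint's isomorphism $K_0(H^sG)\xrightarrow{\ \cong\ }K_0(C^*_{\red}G)$ transports this to linear independence in $K_0(C^*_{\red}G)$.

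For the assembly-map statement I would argue by contradiction. Suppose a nonzero $x=\sum_{i=1}^k a_i[p_i]$ ($a_i\in\zz$) lies in the image of $\mu:K^G_0(EG)\to K_0(C^*_{\red}G)$. Each $\tau_i$ is continuous on $H^sG$ and, via Jolissaint's isomorphism, induces a homomorphism $\tau_{i*}:K_0(C^*_{\red}G)\to\cc$ with $\tau_{i*}([p_j])=A_{ij}$. The crux is the claim that for $i\ge1$ the delocalized trace $\tau_{i*}$ annihilates $\mathrm{im}(\mu)$; granting this, $0=\tau_{i*}(x)=\sum_{j=1}^k A_{ij}a_j$ for every $i\ge1$, i.e. $Ba=0$ with $B=(A_{ij})_{1\le i,j\le k}$ the invertible triangular block, forcing $a=0$ and hence $x=0$, a contradiction. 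Combining the three conclusions yields Conjecture \ref{WY conj} for this class of groups.

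The hard part is exactly this claim: that each delocalized trace $\tau_i$, $i\ge1$, vanishes on the image of the assembly map from the space $EG$ of free proper actions. The reason is geometric: since the $G$-action on $EG$ is free, every nontrivial (in particular every torsion) element acts without fixed points, so when one computes the pairing of a higher-index class $\mu(\xi)$ with the trace localized at the conjugacy class $C(g_i)$ of the torsion element $g_i$, the relevant delocalized index density is supported on the empty fixed-point set and therefore vanishes. Equivalently, under the Burghelea-type decomposition of the cyclic homology of the group algebra into conjugacy-class components, the Chern character of $\mathrm{im}(\mu)$ meets only the homogeneous ($e$) component, on which $\tau_i$ is zero for $i\ge1$. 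Making this rigorous at the level of $C^*_{\red}G$---propagating the vanishing through the RD-completion and the identification $K_0(H^sG)\cong K_0(C^*_{\red}G)$, and handling the noncompactness of $EG$ via a cocompact exhaustion---is where the main technical effort will lie.
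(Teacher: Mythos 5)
Your construction of the traces and the invertibility argument coincide with the paper's own proof: the paper uses exactly $\tau_0(\sum c_g g)=c_e$ and $\tau_i(\sum c_g g)=\sum_{g\in C(g_i)}c_g$, orders $d_1<\cdots<d_k$, and gets the same upper-triangular matrix with diagonal entries $\tau_i(p_i)\ge 1/d_i$. Your divisibility refinement (a nonzero entry forces $d_i\mid d_j$) is a slightly sharper version of the paper's observation that $g_j^t$ has order at most $d_j<d_i$ and so cannot be conjugate to $g_i$; both yield the same conclusion, and the passage to $K_0(C^*_{\red}G)$ via Jolissaint's isomorphism $K_0(H^sG)\cong K_0(C^*_{\red}G)$ is identical.

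The genuine gap is exactly where you flag it: the claim that each delocalized trace $\tau_i$, $i\ge 1$, annihilates the image of the assembly map. Neither the free-action/empty-fixed-point heuristic nor the appeal to the Burghelea decomposition, as stated, is a proof: $\tau_i$ is defined only on the dense subalgebra $H^sG$, $\mathrm{im}(\mu)$ lives at the $C^*$-level, and making the ``delocalized index density'' or the Chern-character compatibility rigorous across the completion $K_0(H^sG)\cong K_0(C^*_{\red}G)$ is precisely the unproved content. The paper closes this gap not by a direct index computation on $EG$ but by importing the Weinberger--Yu machinery: embed $\cc G \into \cS G$ (Schatten-class coefficients) via $j(a)=P_0a$; form the multitraces $\tau_{n,g}(a_0,\ldots,a_n)=\tr_g(a_0a_1\cdots a_n)$ for $n=2k$; invoke Lemma 3.2 of \cite{WY}, which shows these vanish on the image of the algebraic assembly map $A:H^{orG}_0(EG,\mathbb{K}(\cS)^{-\infty})\to K_0(\cS G)$; use Connes' periodicity identity $j^*\tau_{n,g}=S^k\tau_g$ so that this vanishing computes the pairing with the delocalized trace $\tau_g$; and transport the conclusion to $\mu$ via two commutative diagrams, one comparing $A$ with $\mu$ and one factoring $\mu$ through $K_0(H^\infty G)\cong K_0(C^*_{\red}G)$. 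Without this (or an equivalent delocalized higher-index theorem) your contradiction argument does not launch. A second, smaller omission: Conjecture \ref{WY conj} is stated for $C^*_{\tmax}G$, so you still need the transfer step the paper ends with --- linear independence pushes forward along $K_0(C^*_{\tmax}G)\to K_0(C^*_{\red}G)$ induced by the canonical quotient, and the maximal assembly map factors the reduced one, so the non-image statement transfers as well.
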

\begin{proof} Consider $\tau_0:H^s(G) \to \cc$ given by $\tau_0\(\sum c_g g \) = c_e$. This is clearly a bounded map. For $i=1,\ldots k$ consider $\tau_i:H^s(G) \to \cc$ given as above by
\[\tau_i(\sum c_gg) = \sum_{g \sim g_i} c_g.\]
Then $\tau_0(p_i)=\frac{1}{d_i}$. Also note that for $d_i>d_j$, for any $t$, we have $\deg(g_j^t) \leq \deg(g_j)<\deg(g_i)$, so $g_j^t$ and $g_i$ cannot be similar for any $j$. Thus for $d_i>d_j$, we have $\tau_i(p_j)=0$.

Also note that $\tau_i(p_i) \geq \frac{1}{d_i}$, because $\tau_i(g_i^s)$ is either $0$ or $1$, and it is $1$ for $s=1$.

So if we suppose that $d_1<d_2,\ldots$, we get that matrix $\tau_i(p_j)$ is upper triangular with all diagonal elements non-zero, so its determinant is nonzero, as desired.

Jolissaint showed in \cite{Joli-K} that for property RD groups, the map $H^sG \to C^*_{\red}G$ is an isomorphism on the level of $K$-theory. That is, property RD implies $K^0(H^s(G)) \simeq K^0(C^*_\red(G)$. Thus, the $p_i$ are also linearly independent in $C^*_\red(G)$. 

Let us show that the non-zero elements in the subgroup generated by $p_i$ are not in the image of the assembly map $K_0^G(EG) \to K_0(C^*_{\red}G)$. For a Hilbert space $H$, let $\cS$ denote the ring of Schatten class operators, that is operators $T:H \to H$ such that there exists $p \geq 1$ such that $\tr(T^*T)^{p/2}<\infty$, where the trace $\tr$ is the sum of the singular values. Let $\cS G$ denote the group algebra of $G$ with coefficients in $\cS$. As in \cite{WY} and \cite{Yu}, let $H_0^{orG}(EG, \mathbb{K}({\cal S})^{-\infty})$ be the generalized $G$-invariant homology theory associated to the non-connective algebraic K-theory spectrum $ \mathbb{K}(\cS)^{-\infty})$. Then there is commutative diagram
\begin{diagram}
H^{or G}_0(EG, \mathbb{K}(\cS)^{-\infty}) & \rTo^A & K_0(\cS G)\\
\dTo^{\simeq} & \ruTo & \dTo \\
K^G_0(EG) & \rTo^{\mu} & K_0(C^*_{\red}G)\\
\end{diagram}
There is a homomorophism $j:\cc G \to \cS G$ given by $j(a)=P_0a$ where $P_0$ is a rank 1 projection. We may consider the finite part of $K_0(\cS G)$ to be the subgroup generated by $j_*(p_i)$ where the $p_i$ are defined as above. To show that the finite part of $K_0(C^*_{\red}G)$ is not in the image of $\mu$ it suffices to show that the finite part of $K_0(\cS G)$ is not in the image of $H_0^{orG}(EG, \mathbb{K}({\cal S})^{-\infty})$.

For an algebra $\cA$, we say that $\tau_n:\cA^{\times n+1} \to \cc$ is a multitrace if it satisfies
\[\tau_n(a_0,a_1,\ldots a_n)=(-1)^n\tau_n(a_1,a_2,\ldots, a_n,a_0)\]
and
\[\sum_{i=0}^n (-1)^i\tau_n(a_0, \ldots, a_ia_{i+1}, \ldots a_{n+1})+(-1)^{n+1}\tau_n(a_{n+1}a_0, a_1,\ldots a_n)=0.\]

For $g$ of finite order, consider the trace $\tr_g$ on $\cS_1 G$ given by $\tr_g(\sum s_{\gamma}\gamma)=\sum_{\gamma \in C(g)}\tr (s_\gamma)$, where $\tr$ is the standard trace on $\cS_1$. Then for $n = 2k$, consider $\tau_{n,g}$ on $S_n(G)$ given by
\[\tau_{n,g}(a_0,\ldots a_n)=\tr_{g}(a_0a_1\cdots a_n).\]

It was shown in \cite{WY} (see the proof of Lemma 3.2 of \cite{WY}) that for element $p \in K_0(\cS G)$ in the image of $A$, we have that such $\tau_{n,g}$, $\tau_{n,g}(p,p,\ldots , p)=0$.

For groups with property RD, we have commutative diagram
\begin{diagram}
K_0^G(EG) & & \\
\dTo & \rdTo & \\
 K_0(H^\infty G) & \rTo^\simeq & K_0(C^*_{\red}G)\\
\end{diagram} 
where $H^\infty G$ is the intersection of the $H^sG$, where the vertical maps are assembly maps.

Again as in \cite{WY}, according to Connes' cyclic cohomology  $$j^*\tau_{n,g}= S^k\tau_g,$$ where $\tau_g: {\mathbb C}G \to {\mathbb C}$ is defined by $\tau_g(\sum c_\gamma\gamma) =\sum_{\gamma\in C(g)} c_\gamma$. In \cite{WY} it was shown that for any element $x$ in the image of the assembly map $A:H^{or G}_0(EG, \mathbb{K}(\cS)^{-\infty})\to K_0(\cS G)$, we have $\la \tau_{n,g}, x \ra = 0$ for the $\tau_{n,g}$ as defined above, but in this case, we know that there is a trace $\tau_{g_i}$ (where $\{g_i\} $ is as in Conjecture \ref{WY conj}) of this form that does not vanish on $x$, so $x$ cannot be in the image.

Now we show the conjecture, which is the same two statements, but for $C^*_{\max}G$ instead of $C^*_{\red}G$. For the first, note that there is a natural map $C^*_{\max}(G) \to C^*_{\red}(G)$, so linear independence in $C^*_{\red}G$ implies linear independence in $C^*_{\max}(G)$. For the second, note that the assembly map $K_0^G(EG) \to K^0(C^*_{\red}G)$ factors through $K_0(C^*_{\max}G)$, so if the elements considered are in the image $K_0^G(EG)$ in $K_0(C^*_{\max}G)$, then they are also in the image in $K^0(C^*_{\red}G)$.

Thus, we have shown Conjecture \ref{WY conj} for Property RD groups whose conjugacy classes of torsion elements have polynomial growth.
\end{proof}

Note that we have also established the conjecture for groups that are residually contained in the above class:

\begin{defn} Let $\cR$ be the class of groups $G$ with property RD such that for any torsion element $g$, $C(g)$ has polynomial growth. We say that a group $G$ is residually in $\cR$ if for any finite subset $F \subset G$ there is a group $H \in \cR$ and a homomorphism $\phi:G \to H$ such that $\phi$ is injective on $F$. 
\end{defn}

\begin{thm} Any group that is residually in $\cR$ satisfies Conjecture \ref{WY conj}.
\end{thm}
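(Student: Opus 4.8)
The plan is to deduce both halves of Conjecture~\ref{WY conj} for $G$ from the corresponding statements for a group $H\in\cR$, where they already hold by Theorem~\ref{thm lin indep}, using the functoriality of $C^*_{\max}$ together with the naturality of the assembly map under an approximating homomorphism. This is why the conjecture is phrased for $C^*_{\max}$ rather than $C^*_{\red}$: the maximal construction is functorial for \emph{all} group homomorphisms, whereas $C^*_{\red}$ is not.

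Concretely, let $g_1,\dots,g_k\in G$ be torsion elements of distinct orders $d_1,\dots,d_k$, and put $F=\bigcup_{i=1}^k\langle g_i\rangle$, which is a finite subset of $G$ since each $g_i$ has finite order. Since $G$ is residually in $\cR$, there are a group $H\in\cR$ and a homomorphism $\phi:G\to H$ that is injective on $F$. The crucial point is that $\phi$ preserves the orders of the $g_i$: we have $\phi(g_i)^{d_i}=\phi(g_i^{d_i})=e$, while for $0<m<d_i$ the element $g_i^m\in F\setminus\{e\}$ is distinct from $e\in F$, so injectivity of $\phi$ on $F$ gives $\phi(g_i)^m=\phi(g_i^m)\neq e$. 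Hence each $\phi(g_i)$ has order exactly $d_i$, the orders remain distinct, and $H\in\cR$ means precisely that Theorem~\ref{thm lin indep} applies to $H$ with the torsion elements $\phi(g_1),\dots,\phi(g_k)$. Writing $p_i'\in\cc H$ for the idempotent attached to $\phi(g_i)$, the algebra map $\cc G\to\cc H$ induced by $\phi$ sends $p_i\mapsto p_i'$ and $1\mapsto 1$, and extends to $\phi_*:K_0(C^*_{\max}G)\to K_0(C^*_{\max}H)$ with $\phi_*[p_i]=[p_i']$ and $\phi_*[1]=[1]$.

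Linear independence of $1,p_1,\dots,p_k$ in $K_0(C^*_{\max}G)$ is then immediate: any relation $\sum_{i=0}^k a_i[p_i]=0$ (with $p_0=1$) is carried by the homomorphism $\phi_*$ to $\sum_{i=0}^k a_i[p_i']=0$, and Theorem~\ref{thm lin indep} forces all $a_i=0$ because $1,p_1',\dots,p_k'$ are independent in $K_0(C^*_{\max}H)$. For the assembly-map assertion I would use naturality: $\phi$ induces $B\phi:BG\to BH$ and hence a map $\phi_\sharp:K_0^G(EG)\to K_0^H(EH)$ fitting into the commutative square
\[
\begin{CD}
K_0^G(EG) @>{\mu_G}>> K_0(C^*_{\max}G)\\
@VV{\phi_\sharp}V @VV{\phi_*}V\\
K_0^H(EH) @>{\mu_H}>> K_0(C^*_{\max}H).
\end{CD}
\]
If a nonzero $x=\sum_{i=1}^k a_i[p_i]$ in the subgroup generated by $p_1,\dots,p_k$ were equal to $\mu_G(y)$, then $\phi_*(x)=\mu_H(\phi_\sharp(y))$ would lie in the image of $\mu_H$; but $\phi_*(x)=\sum_{i=1}^k a_i[p_i']$ is nonzero (otherwise all $a_i$ vanish by independence over $H$), contradicting the conclusion of Theorem~\ref{thm lin indep} that no nonzero element of the subgroup generated by the $p_i'$ lies in the image of $\mu_H$.

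The step I expect to demand the most care is the commutativity of this square, i.e.\ the naturality of the maximal assembly map $\mu:K_0^G(EG)\to K_0(C^*_{\max}G)$ with respect to an arbitrary, possibly non-injective, homomorphism $\phi$. This functoriality is standard for the free (strong Novikov) assembly map, since it is a natural transformation built from the functors $B(-)$ and $C^*_{\max}(-)$ together with the index/descent construction; the only thing one must check by hand is that the non-injective $\phi$ still transports the torsion idempotents correctly, and this is exactly what is guaranteed by taking $F$ to contain all of the cyclic groups $\langle g_i\rangle$, which pins down the orders and hence the idempotent structure.
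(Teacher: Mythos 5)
Your proposal is correct and takes essentially the same approach as the paper: the same finite set $F=\bigcup_i\langle g_i\rangle$ of powers of the $g_i$, pushing the idempotents forward along the induced map $C^*_{\max}G\to C^*_{\max}H$ to a group $H\in\cR$ where Theorem \ref{thm lin indep} applies, and invoking naturality of the maximal assembly map for the second statement. If anything, your write-up is more careful than the paper's, which leaves the order-preservation check for $\phi(g_i)$ and the commutative square implicit.
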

\begin{proof} Let $g_1,\ldots g_k$ be the torsion elements of order $d_1,\ldots d_k$ in question, and let $p_1,\ldots p_k$ be the corresponding idempotents. Let $F = \{g_i^{t_i}|0 \leq t_i<d_i\}$. Then note that the $\phi(g_i)$ are torsion elements of order $d_i$ in $H$, and their corresponding idempotents in $\cc H$ are $\phi(g_i)$, where $\phi$ denotes the induced map $\phi: \cc G \to \cc H$. But note that any unitary representation of $H$ is also a unitary representation of $G$, so $\phi$ also induces a map $C^*_{\max}G \to C^*_{\max}H$. By the above corollary, $H$ satisfies the conjecture, so for $p_i \in K_0(C^*_{\max}G)$, we have $\phi(p_i) \in K_0(C^*_{\max}H)$ are linearly independent. But this means that the $p_i$ are linearly independent in $K_0(C^*_{\max}G)$ as well.

The fact that nonzero elements in the subgroup generated by $p_i$ are not in the image of the assembly map follows from the functoriality of the assembly map.
\end{proof}

This theorem has consequences related to structure groups of manifolds. In particular we have

\begin{cor} Let $M$ be a compact manifold of dimension $4k-1$ with $k>1$ an integer and $\pi_1(M)=G$, and $G$ is in the class $\cR$ described above. Then the rank of the structure group $S(M)$ is bounded below by the number of distinct orders of torsion elements of $G$.
\end{cor}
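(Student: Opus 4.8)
The plan is to combine Theorem~\ref{thm lin indep} with the geometric mechanism of Weinberger and Yu \cite{WY}, whose algebraic input is precisely what Theorem~\ref{thm lin indep} supplies. Since $G \in \cR$, that theorem gives that $1, p_1, \ldots, p_k$ are linearly independent in $K_0(C^*_{\red}G)$ and, more importantly here, that no nonzero element of the subgroup generated by $p_1, \ldots, p_k$ lies in the image of the assembly map; its proof in fact establishes this at the level of $K_0(\cS G)$, by exhibiting multitraces $\tau_{n,g_i}$ that vanish on the image of $A \colon H^{orG}_0(EG, \mathbb{K}(\cS)^{-\infty}) \to K_0(\cS G)$ but not on the finite part. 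Because $1$ does lie in the image of the assembly map while the $p_i$ do not, this produces a rank-$k$ subgroup of the cokernel of $A$, where $k$ is the number of distinct orders of nontrivial torsion elements of $G$.

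Next I would recall how \cite{WY} converts this cokernel into a lower bound for the structure group. For $\dim M = 4k-1$ there is a map from the structure group $S(M)$ into $K_0(\cS G)$, given by the relative index (higher $\rho$-invariant) of the signature operator associated to a homotopy equivalence; the hypothesis $4k-1$ is chosen so that this index lands in $K_0$, since $4k \equiv 0 \pmod 4$ is the degree in which signatures live, matching the degree of the idempotents $p_i$. The multitraces $\tau_{n,g_i}$ of the previous paragraph then pull back, via Connes' pairing exactly as in the proof of Theorem~\ref{thm lin indep}, to invariants on $S(M)$ that detect the finite part.

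For each $g_i$ one constructs by surgery a homotopy equivalence $(f_i, M_i) \in S(M)$ whose relative index represents the class $[p_i]$ in the cokernel of $A$; ordering the $d_i$ as in Theorem~\ref{thm lin indep} makes the matrix of pairings $\langle \tau_{n,g_i}, (f_j, M_j)\rangle$ upper triangular with nonzero diagonal. Hence the $(f_i, M_i)$ are linearly independent in $S(M) \otimes \mathbb{Q}$, and $\operatorname{rank} S(M) \geq k$. The surgery construction requires $\dim M \geq 5$, which is exactly the content of the hypothesis $k > 1$.

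I expect the main obstacle to be this realization step: one must produce actual manifold structures whose relative indices hit the finite part, rather than merely knowing that the finite part is a nonzero subgroup of the cokernel of $A$. This is carried out as in \cite{WY} using surgery theory in dimension $\geq 5$ together with the vanishing of the multitraces on the image of the assembly map; the single new ingredient that makes the argument apply to all of $\cR$ is the algebraic nonvanishing and linear independence proved in Theorem~\ref{thm lin indep}.
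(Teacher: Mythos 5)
Your proposal is correct and takes essentially the same route as the paper: the paper's entire proof consists of noting that, by Theorem~\ref{thm lin indep}, any $G \in \cR$ satisfies Conjecture~\ref{WY conj}, and then citing Theorem~1.5 of \cite{WY}, which states that the rank bound on $S(M)$ holds for every group satisfying that conjecture. The surgery-theoretic machinery you sketch (higher $\rho$-invariants, realization of the $[p_i]$ by homotopy equivalences, pairing with the multitraces) is precisely the content of that cited Weinberger--Yu theorem, which the paper treats as a black box rather than re-proving.
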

\begin{proof} This follows from Theorem 1.5 in \cite{WY}, which states that this corollary holds for any $G$ satisfying Conjecture \ref{WY conj}
\end{proof}

It is reasonable to conjecture that a similar state to Conjecture \ref{WY conj} holds:

\begin{conj} For a group $G$ with torsion elements $g_1,g_2,\ldots, g_k$ of orders $d_1,\ldots d_k$ respectively, where the $d_i$ are distinct positive integers. Let $p_i$ denote the idempotent corresponding to $g_i$. Then the $1,p_1,\ldots p_k$ are linearly independent in $K_0(C^*_{\red}G)$. Moreover, any nonzero element in the subgroup generated by $p_1,\ldots p_k$ is not in the image of the assembly map $K^G_0(EG) \to K_0(C^*_{\max}(G))$, where $EG$ is the universal space for proper and free $G$-action.
\end{conj}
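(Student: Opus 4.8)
The natural first move is to imitate the proof of Theorem~\ref{thm lin indep}: produce traces $\tau_0,\tau_1,\ldots,\tau_k$ on $C^*_\red G$ for which the matrix $A_{ij}=\tau_i(p_j)$ is invertible, and, for the assembly-map statement, lift the $p_i$ along the homomorphism $j\colon \cc G\to \cS G$, $j(a)=P_0a$, and evaluate the multitraces $\tau_{n,g_i}$ that are already known (from \cite{WY}) to annihilate the image of the algebraic assembly map $A\colon H^{orG}_0(EG,\mathbb{K}(\cS)^{-\infty})\to K_0(\cS G)$. The candidate functionals are essentially forced: $\tau_0(\sum c_g g)=c_e$, which is exactly the canonical trace and extends continuously to all of $C^*_\red G$, and, for $i\ge 1$, the conjugacy-class functionals $\tau_{g_i}(\sum c_g g)=\sum_{g\in C(g_i)}c_g$. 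Ordering the orders $d_i$ makes $(\tau_i(p_j))$ upper triangular with nonzero diagonal, exactly as before, so the entire statement reduces to a single analytic question: \emph{does each $\tau_{g_i}$ extend to a bounded trace on $C^*_\red G$ (equivalently, by Jolissaint's isomorphism, on $H^sG$ for some $s$)?}

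The main obstacle is precisely that this extension can fail. The Lemma preceding Theorem~\ref{thm lin indep} shows the extension exists when $C(g_i)$ grows polynomially, but the negative results discussed in this paper show that when a conjugacy class grows exponentially the functional $\tau_{g_i}$ is unbounded on $C^*_\red G$, so the trace method \emph{provably cannot} separate the $p_i$ in that regime. Consequently, any proof of the conjecture in full generality must detect the finite part by an invariant insensitive to conjugacy-class growth. This is the crux of the difficulty, and is exactly why the statement is only conjectural: it cannot be reduced to the maximal case (the map $C^*_\max G\to C^*_\red G$ runs the wrong way, so independence in $C^*_\red G$ does not follow from Conjecture~\ref{WY conj}), and the degree-zero cocycle that works for the max algebra need not survive the passage to the reduced completion.

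The approach I would therefore pursue is to replace the $0$-dimensional conjugacy-class trace by an invariant of a different flavor. One option is to seek higher cyclic cocycles (the $\tau_{n,g}$ themselves are the relevant candidates on $\cS G$) whose growth requirements are milder, or to work inside a holomorphically closed dense subalgebra of $C^*_\red G$ with the same $K$-theory on which such a cocycle is bounded even when $\tau_{g_i}$ is not. A second, more structural option is available when $G$ satisfies the Baum--Connes conjecture: then the reduced assembly map $K_0^G(\underline{E}G)\to K_0(C^*_\red G)$ is an isomorphism, and, via the delocalized Chern character, $K_0(C^*_\red G)\otimes\cc$ decomposes into summands indexed by conjugacy classes of finite-order elements. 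The class of $p_i$ should have nonzero component in the summand attached to $g_i$ and its powers, whereas the free-proper assembly map $K_0^G(EG)\to K_0(C^*_\red G)$ factors through $K_0^G(\underline{E}G)$ and lands in the summand of the identity conjugacy class. Distinctness of the orders $d_i$ would then yield linear independence and exclusion from the assembly image simultaneously.

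The hardest point, however, will be to carry out either strategy \emph{without} a growth hypothesis and for an arbitrary group. Constructing, for a general $G$, a bounded cyclic cocycle on $C^*_\red G$ (or on a suitable dense subalgebra) that pairs nontrivially with each $p_i$ while vanishing on the free-proper assembly image is exactly the step I expect to resist. The negative results show the obvious degree-zero candidate is unavailable; whether a higher-degree substitute always exists, or whether the finite part can genuinely collapse in $K_0(C^*_\red G)$ for some rapidly growing group, is precisely what remains open.
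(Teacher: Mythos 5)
This statement is a conjecture that the paper itself leaves unproved, so there is no proof of it in the paper to compare against, and your proposal correctly recognizes this rather than claiming a proof. Your opening reconstruction --- the trace matrix $\tau_i(p_j)$ made upper triangular by ordering the $d_i$, the extension of the conjugacy-class functionals $\tau_{g_i}$ to $H^sG$ under polynomial growth, and the Schatten-class multitrace argument excluding the image of the assembly map --- is precisely the paper's proof of Theorem~\ref{thm lin indep}, i.e.\ the special case the paper actually establishes, and your diagnosis of the obstruction (the negative results showing $\tau_{g_i}$ is unbounded on $C^*_\red G$ when $C(g_i^t)$ grows super-polynomially, together with the fact that the map $C^*_\tmax G \to C^*_\red G$ runs the wrong way for a reduction to Conjecture~\ref{WY conj}) matches the paper's own discussion of why the reduced-algebra statement remains open. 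Your further suggestions (higher cyclic cocycles on a holomorphically closed dense subalgebra, or Baum--Connes plus the delocalized Chern character) go beyond anything in the paper and are reasonable directions, but as you yourself note they are only sketches of strategy, not an argument; the assessment that the general case is genuinely open is the correct conclusion here.
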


\subsection{Negative Results}

Now let us investigate the case where the group has Property RD, but some of the conjugacy classes do not have polynomial growth. We will show that all traces must be zero on these conjugacy classes. 

\begin{lemma} Let $G$ be a group of Property RD and Let $h \in G$ be an element (which does not necessarily have to have finite order). Let $C(g)$ be its conjugacy class and let $C(g)_l \subset C(g)$ be those elements of length $l$, and let $n_l=|C(g)_l|$. Assume that $n_l$ is does not grow polynomially, that is, for any polynomial $P$, there are an infinite number of $l$ such that $n_l>P(l)$. Then any trace $\tau:C^*_\red G \to \cc$ must have $\tau(h)=0$.
\end{lemma}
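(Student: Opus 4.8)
The plan is to squeeze $\tau(h)$ between $0$ and a quantity that tends to $0$ along a carefully chosen sequence of lengths, using the trace property to convert a sum over a conjugacy class into a multiple of $\tau(h)$, and Property RD to control the reduced norm of that sum. The starting observation is that any trace $\tau$ on $C^*_\red G$ is constant on conjugacy classes: if $\gamma \in C(g)$, write $\gamma = w h w^{-1}$ with $w \in G$ a unitary in $C^*_\red G$, and then $\tau(\gamma) = \tau(whw^{-1}) = \tau(w^{-1}wh) = \tau(h)$. Writing $c = \tau(h)$, the goal becomes showing $c = 0$.

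First I would fix an exponent $s$ for which Property RD holds, so that $\|x\|_\red \leq C\|x\|_{H^s}$ for all $x \in \cc G$ and some constant $C$. For each length level $l$ I then introduce the test element
\[x_l = \sum_{\gamma \in C(g)_l} \gamma \in \cc G.\]
Because $\tau$ is conjugation-invariant and every $\gamma \in C(g)_l$ is conjugate to $h$, we get $\tau(x_l) = \sum_{\gamma \in C(g)_l} \tau(\gamma) = n_l c$. On the other hand, since each term has coefficient $1$ and length exactly $l$, the definition of the $H^s$-norm computes directly to $\|x_l\|_{H^s} = \sqrt{n_l}\,(1+l)^s$.

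Combining the continuity of the trace, the RD inequality, and these two computations yields
\[n_l\,|c| = |\tau(x_l)| \leq \|\tau\|\,\|x_l\|_\red \leq \|\tau\|\,C\,\sqrt{n_l}\,(1+l)^s,\]
hence $|c| \leq \|\tau\|\,C\,(1+l)^s/\sqrt{n_l}$ for every $l$ with $n_l \neq 0$. The final step is to invoke the super-polynomial growth hypothesis applied to the polynomial $P(l) = (1+l)^{2s+2}$: this produces infinitely many $l$ with $n_l > (1+l)^{2s+2}$, and for each such $l$ we obtain $(1+l)^s/\sqrt{n_l} < 1/(1+l)$. Letting $l \to \infty$ through this infinite set forces $|c| = 0$, so $\tau(h) = 0$.

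I do not expect a serious obstacle here; the proof is essentially a clean pairing of the trace with RD. The one point that must be chosen correctly is the test element as a single length-shell sum $\sum_{\gamma \in C(g)_l} \gamma$ rather than a sum over a whole ball: summing over a single shell keeps $\|x_l\|_{H^s}$ equal to $\sqrt{n_l}\,(1+l)^s$ exactly, so the ratio in the final bound is transparent and the growth hypothesis applies directly. The only hypothesis worth making explicit is that $\tau$ is continuous (equivalently bounded), which is automatic for a trace on a $C^*$-algebra and is precisely what lets the estimate pass from $\cc G$ to the reduced completion.
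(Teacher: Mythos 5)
Your argument is correct, and it takes a genuinely different (and more streamlined) route than the paper's. The paper argues by contradiction with a single infinite witness: after normalizing $\tau(h)=1$, it selects lengths $l_i$ with $n_{l_i}>(1+l_i)^{4i}$, builds $x=\sum x_\gamma\,\gamma$ with coefficients $n_{l_i}^{-5/8}$ on the shells $C(g)_{l_i}$, checks that $x\in H^s$ and that the finite partial sums $x_N$ converge to $x$ in $H^s$ (hence in $C^*_{\red}G$, by RD), while $\tau(x_N)=\sum_i n_{l_i}^{3/8}$ diverges, contradicting boundedness of $\tau$. You instead work with the family of finite shell sums $x_l=\sum_{\gamma\in C(g)_l}\gamma$ and the quantitative squeeze $n_l|c|=|\tau(x_l)|\le\|\tau\|\,C\sqrt{n_l}\,(1+l)^s$, then let $l\to\infty$ along the infinitely many $l$ with $n_l>(1+l)^{2s+2}$. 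The underlying mechanism is identical in both proofs --- on a shell the trace scales like $n_l$ while the $H^s$-norm scales only like $\sqrt{n_l}$ times a polynomial --- but your version avoids all convergence bookkeeping (no infinite element, no choice of the exponent $-5/8$, no subsequence whose growth exponent depends on $i$), needs RD only for one fixed $s$, and produces an explicit decay bound $|c|\le\|\tau\|\,C/(1+l)$ rather than a contradiction. What the paper's construction buys is a single concrete element lying in $H^s$ for \emph{every} $s$ on which any nonvanishing conjugation-invariant functional must blow up, a slightly stronger artifact than the lemma requires; for the statement as given, your direct estimate is cleaner. One shared caveat: both arguments use that $\tau$ is bounded on $C^*_{\red}G$. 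For a merely linear functional with the trace property this is not free, but it is automatic for positive tracial functionals (e.g.\ tracial states), which is clearly the intended reading --- and it is the same hypothesis the paper invokes when it concludes that ``$\tau$ cannot be a bounded linear functional,'' so your explicit flag of this point is appropriate rather than a gap.
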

\begin{proof} We proceed by contradiction. Suppose we have $\tau$ a trace such that $\tau(h) \neq 0$. By scaling it, we can assume $\tau(h)=1$. Then because $\tau$ is a trace, it is constant on conjugacy classes, so for any $h \in C(g)$, we have $\tau(h)=1$.

Since $n_l$ is super-polynomial, we may consider an increasing sequence $l_1,l_2,\ldots$ such that $n_{l_i}>(1+l_i)^{4i}$. That is, $n_{l_1} > (1+l_1)^{4}$, $n_{l_2} > (1+l_2)^{8}$, etc. 

Now consider the element $x=\sum x_h h$ with
\[x_h = \begin{cases}n_{l(h)}^{-5/8} & l(h) \in \{l_1,l_2,\ldots\} \text{ and } h \in C(g) \\ 0 & \text{else}\end{cases}.\]

Then note that for the Sobolev norm $\|\sum c_h h\|_{H^s}^2=\sum |c_h|^2(1+l(h))^{2s}$, we have that the above element is in $H^s$ with
\[\|\sum x_h h\|_{H^s} = \sum_i n_{l_i} \cdot n_{l_i}^{-10/8}(1+l_i)^{2s} < \sum_i (1+l_i)^{-i}(1+l_i)^{2s}\]
which clearly converges.

In fact, $x_N = \sum_{l(h)<N}x_hh$ converges to $x$ in the $H^s$ norm. Therefore, this is also true in the $C^*_rG$, because property RD.

However, I claim that for any $\tau$ with $\tau(h)=1$ on $C(g)$, $\tau$ does not converge on $x_N$, therefore $\tau$ is not a bounded linear functional on $H^s$. Note that the value would be
\[\sum_{i \text{ with }l_i<N} n_{l_i} \cdot n_{l_i}^{-5/8}=\sum_{i \text{ with }l_i<N} n_{l_i}^{3/8}\]
but $n_{l_i}>1$ for all $i$, so clearly this sum diverges as $N \to \infty$.

Thus, $\tau$ cannot be a bounded linear functional.
\end{proof}

Applying this lemma we have the following theorem:

\begin{thm} Let $G$ be a group with Property RD and let $g_1 \in G$ be a torsion element such that the conjugacy classes of $g_1^t$ do not grow polynomially for any $t$. Then in $C^*_\red(G)$ we cannot distinguish $1,p_1$ by traces. 
\end{thm}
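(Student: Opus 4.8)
The plan is to recall the structure of the idempotent $p_1 = \frac{1}{d_1}\sum_{t=0}^{d_1-1}g_1^t$ and to analyze what a trace $\tau$ on $C^*_\red G$ can do when evaluated on it. Because a trace is linear and constant on conjugacy classes, we have $\tau(p_1) = \frac{1}{d_1}\sum_{t=0}^{d_1-1}\tau(g_1^t)$, where $\tau(g_1^0)=\tau(1)$ and each remaining term $\tau(g_1^t)$ depends only on the conjugacy class $C(g_1^t)$. The key input is the preceding lemma: since each $g_1^t$ for $1 \le t \le d_1-1$ is a torsion element whose conjugacy class $C(g_1^t)$ grows super-polynomially by hypothesis, that lemma forces $\tau(g_1^t)=0$ for every such $t$ and every trace $\tau$ on $C^*_\red G$. (One should note that the lemma is stated for a single element but applies verbatim to each power $g_1^t$, whose conjugacy classes are exactly the $C(g_1^t)$ assumed to be super-polynomial.)

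With this in hand, the computation collapses immediately: for any trace $\tau$ on $C^*_\red G$,
\[
\tau(p_1) = \frac{1}{d_1}\left(\tau(1) + \sum_{t=1}^{d_1-1}\tau(g_1^t)\right) = \frac{1}{d_1}\tau(1) = \frac{1}{d_1}\tau(p_0),
\]
since $p_0 = 1$. The point is that every trace takes the same value, up to the fixed scalar $\frac{1}{d_1}$, on $p_1$ and on $p_0$; that is, $\tau(p_1)$ is completely determined by $\tau(p_0)$.

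To finish, I would translate this into the language of distinguishing by traces. To distinguish $1,p_1$ by traces means producing traces $\tau_0,\tau_1$ so that the $2\times 2$ matrix $A_{ij}=\tau_i(p_j)$ has nonzero determinant, where $p_0=1$ and the indices $j=0,1$. But for any two traces $\tau_0,\tau_1$ we have just shown $\tau_i(p_1) = \frac{1}{d_1}\tau_i(p_0)$ for $i=0,1$, so the second column of $A$ is exactly $\frac{1}{d_1}$ times the first column. Hence the columns are linearly dependent and $\det A = 0$, no matter which traces are chosen. This is precisely the statement that $1,p_1$ cannot be distinguished by traces on $C^*_\red G$.

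I expect the only genuine subtlety — and hence the main point to get right — is the justification that the preceding lemma applies to \emph{each} power $g_1^t$ rather than just to $g_1$ itself, since the theorem's hypothesis is phrased in terms of all the $g_1^t$ growing super-polynomially; this is exactly the hypothesis needed to kill every off-identity term. Everything else is the routine linear algebra of observing that a single proportionality between columns forces a vanishing determinant, which requires no estimates.
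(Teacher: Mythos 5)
Your proposal is correct and follows essentially the same route as the paper: apply the preceding lemma to each power $g_1^t$ with $0<t<d_1$ to get $\tau(g_1^t)=0$, deduce $\tau(p_1)=\frac{1}{d_1}\tau(1)$ for every trace $\tau$, and conclude that the matrix $(\tau_i(p_j))$ always has proportional columns and hence vanishing determinant. Your explicit spelling-out of the determinant step and of why the lemma applies to each power is merely a more detailed rendering of the paper's one-line argument.
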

\begin{proof} Applying the above lemma to each $g_i^t$ for $0<t<d$, we get that $\tau(g_i^t)=0$ for all $t$, which means that $\tau(p_i)=\tau(\frac{1}{d_i})$, so for any trace $\tau$, we have $\tau([p_i])=\frac{1}{d_i}\tau([1])$, so we cannot show that $1$ and $p_1$ are linearly dependent using traces.
\end{proof}

Note that a similar result does not hold for $C^*_\tmax G$. In particular, for torsion element $g \in G$ with order $d$, and $p$ the corresponding idempotent, we can always distinguish $[1]$ and $[p]$ using the traces $\sum c_g g \mapsto c_e$ and $\sum c_g g\mapsto \sum c_g$, where the latter is a bounded map because it is the trace of the trivial action of $G$ on $\cc$.

One example satisfying the above conditions is $G = <x,y>/x^3$ with $g_1 = x$. Then $G$ is a hyperbolic group, so it has Property RD. Also, $x$ and $x^2$ both have exponential growth conjugacy classes. In particular, the conjugacy classes contain $gxg^{-1}$ for 
\[g=yx^{a_1}yx^{3-a_1}yx^{a_2}yx^{3-a_2}y\cdots yx^{a_l}yx^{3-a_l}y,\]
where $a_i \in \{1,2\}$. Then $g$ has length $5l+1$ and $gxg^{-1}$ has length $10l+3$, but there are $2^l$ choices of $g$. Thus, applying the above corollary, we see that $[1]$ and $[p_1]$ are not distinguishable by traces.

We also have another corollary to the above lemma:

\begin{thm} Suppose that $G$ is a Property RD group such that none of its conjugacy classes grow polynomially. Then the only trace on $C^*_\red G$ is $\sum c_g g \mapsto c_e$.
\end{thm}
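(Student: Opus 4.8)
The plan is to derive this as a direct consequence of the preceding Lemma, using the continuity of traces together with the density of $\cc G$ in $C^*_\red G$. First I would recall that any trace $\tau$ on $C^*_\red G$ is a bounded linear functional satisfying $\tau(xy)=\tau(yx)$; taking $y$ to be a group element and conjugating shows that $\tau$ is constant on each conjugacy class, so $\tau$ is determined by the single scalar it assigns to each class.

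Next I would apply the preceding Lemma to every nontrivial element. By hypothesis every conjugacy class other than that of the identity has super-polynomial growth, so for each $g \neq e$ the Lemma yields $\tau(g)=0$. (I read the hypothesis ``none of its conjugacy classes grow polynomially'' as referring to the nontrivial classes, since $\{e\}$ is finite and hence trivially of polynomial growth.) Thus $\tau$ vanishes on every group element except $e$. On the dense subalgebra $\cc G$ linearity then gives $\tau\left(\sum c_g g\right)=\sum c_g \tau(g)=c_e\, \tau(e)=\tau(e)\, \tau_0\left(\sum c_g g\right)$, where $\tau_0$ is the canonical trace $\sum c_g g \mapsto c_e$. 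Hence $\tau$ and $\tau(e)\tau_0$ agree on $\cc G$; as both are bounded and $\cc G$ is dense in $C^*_\red G$, they agree everywhere. After normalizing so that $\tau(e)=1$ this reads $\tau=\tau_0$.

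The real content is carried entirely by the preceding Lemma, so I expect no serious obstacle. The one point that needs care is the passage from group elements to arbitrary elements of $C^*_\red G$: a general element of the reduced $C^*$-algebra is only a norm-limit of finitely supported sums, so one cannot simply ``read off the coefficient of $e$'' but must instead invoke the boundedness of $\tau$ together with the density of $\cc G$. It is also worth flagging explicitly the reading of the hypothesis concerning the conjugacy class of the identity, and noting that the conclusion is really that every trace is a scalar multiple of $\tau_0$.
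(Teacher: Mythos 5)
Your proof is correct and is exactly the argument the paper intends: the paper states this theorem without proof, as an immediate corollary of the preceding lemma, and your deduction (constancy on conjugacy classes, the lemma killing every nontrivial class, then boundedness plus density of $\cc G$ in $C^*_\red G$) is the natural way to fill it in. Your two flagged caveats --- that the hypothesis must be read as applying only to nontrivial conjugacy classes, and that the conclusion is properly ``every trace is a scalar multiple of the canonical trace'' --- are both apt and, if anything, improve on the paper's statement.
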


\end{document}